\newtheorem{theorem}{Theorem}[section]
\newtheorem{corollary}[theorem]{Corollary}
\newtheorem{lemma}[theorem]{Lemma}
\newtheorem{remark}[theorem]{Remark}
\theoremstyle{definition}
\newtheorem{definition}[theorem]{Definition}
\numberwithin{equation}{section}
\begin{document}


\baselineskip=17pt


\title[post-critically finite polynomial of the form $x^d+c$]{On the orbit of a post-critically finite polynomial of the form $x^d+c$}

\author[V. Goksel]{Vefa Goksel}
\address{Mathematics Department\\ University of Wisconsin\\
Madison\\
WI 53706, USA}
\email{goksel@math.wisc.edu}

\date{}

\begin{abstract}
	In this paper, we study the critical orbit of a post-critically finite polynomial of the form $f_{c,d}(x) = x^d+c \in \mathbb{C}[x]$. We discover that in many cases the orbit elements satisfy some strong arithmetic properties. It is well known that the $c$ values for which $f_{c,d}$ has tail size $m\geq 1$ and period $n$ are the roots of a polynomial $G_d(m,n) \in \mathbb{Z}[x]$, and the irreducibility or not of $G_d(m,n)$ has been a great mystery. As a consequence of our work, for any prime $d$, we establish the irreducibility of these $G_d(m,n)$ polynomials for infinitely many pairs $(m,n)$. These appear to be the first known such infinite families of $(m,n)$. We also prove that all the iterates of $f_{c,d}$ are irreducible over $\mathbb{Q}(c)$ if $d$ is a prime and $f_{c,d}$ has a fixed point in its post-critical orbit.

\end{abstract}

\subjclass[2010]{Primary 11R09, 37P15}

\keywords{Misiurewicz point, post-critically finite, rigid divisibility sequence}

\maketitle

\section{Introduction}
Let $f(x)\in \mathbb{C}[x]$ be a polynomial of degree at least $2$. We denote by $f^n(x)$ the $n$th iterate of $f(x)$. Given $a\in \mathbb{C}$, one fundamental object in dynamics is the orbit $$O_a(f) = \{f(a),f^2(a),\dots\}$$ of $a$ under $f$. When this orbit is finite for all critical points of $f$, we call $f$ \textit{post-critically finite} (PCF). In this paper, we study a special case, namely the PCF polynomials of the form $f_{c,d}(x) = x^d + c\in \mathbb{C}[x]$ for $d\geq 2$.\\

$0$ is the unique critical point of $f_{c,d}$. Suppose $f_{c,d}$ is PCF, i.e., there exist $m,n\in \mathbb{Z}$ with $n\neq 0$ such that $f_{c,d}^m(0) = f_{c,d}^{m+n}(0)$. We say \textit{$f_{c,d}$ has exact type $(m,n)$} if $n$ is the minimal positive integer such that $f_{c,d}^m(0) = f_{c,d}^{m+n}(0)$ and $f_{c,d}^k(0) \neq f_{c,d}^{k+n}(0)$ for any $k < m$. When $m\geq 1$, a number $c_0$ for which $f_{c_0,d}$ has exact type $(m,n)$ is called a \textit{Misiurewicz point with period (m,n)}. It is known that Misiurewicz points with period $(m,n)$ are the roots of a monic polynomial $G_d(m,n)\in \mathbb{Z}[x]$. In particular, $c_0$ is always an algebraic integer. (\cite{Hutz}, Corollary $3.4$). To explain how the polynomial $G_d(m,n)$ is defined, we will follow the notation in (\cite{Hutz}): Let 
$$\Phi^*_{f,n}(x) = \prod_{k|n} (f^k(x)-x)^{\mu(n/k)}$$ be the standard dynatomic polynomial, and define the generalized dynatomic polynomial $\Phi^*_{f,m,n}$ by
$$\Phi^*_{f,m,n}(x) = \frac{\Phi^*_{f,n}(f^m(x))}{\Phi^*_{f,n}(f^{m-1}(x))}.$$ Then, the polynomial $G_d(m,n)$ is defined by 
$$
G_d(m,n) = \left\{
\begin{array}{ll}
\frac{\Phi^*_{f,m,n}(0)}{\Phi^*_{f,0,n}(0)^{d-1}} & \text{if } m\neq 0 \text{ and } n  |  (m-1) \\
\Phi^*_{f,m,n}(0) & \text{otherwise}.
\end{array}
\right.
$$

Having defined the polynomials $G_d(m,n)$, it is natural to ask: Which triples $(d,m,n)$ make $G_d(m,n)$ irreducible? This question is wide open. For a reference, see for instance \cite{Hutz} and \cite{Milnor}. There does not appear to exist any prior work which gives an infinite family of $(d,m,n)$ for which $G_d(m,n)$ is irreducible. In this direction, the following corollary to our main theorems gives the first known such infinite families of $(d,m,n)$.

\begin{corollary}
	$G_d(m,n)$ is irreducible in the following cases:
	\item[(i)] $m\neq 0$, $n=1$, $d$ is any prime.
	\item[(ii)] $m\neq 0$, $n=2$, $d=2$.
	
\end{corollary}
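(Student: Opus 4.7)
The plan is to apply Eisenstein's criterion: directly at $p=d$ for case (i), and at $p=2$ after the affine substitution $c\mapsto c-1$ for case (ii). Throughout, write $p_m(c):=f_{c,d}^m(0)\in\mathbb{Z}[c]$; induction on the recursion $p_m=p_{m-1}^d+c$ gives $p_m(0)=0$ and $p_m(c)=c+O(c^2)$ for all $m\geq 1$, both of which I use freely.

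For case (i), set $A_m(c):=p_m^d-p_m+c$, so that $G_d(m,1)(c)=A_m(c)/[A_{m-1}(c)\,c^{d-1}]$, which is monic by the paper's setup. Modulo $d$, Frobenius (since $d$ is prime) and induction give $p_m(c)\equiv c+c^d+\cdots+c^{d^{m-1}}\pmod d$; a telescoping computation then yields $A_m\equiv c^{d^m}\pmod d$, so that $G_d(m,1)\equiv c^{(d-1)(d^{m-1}-1)}\pmod d$, matching the degree. For the constant term, the identities $p_m-p_{m-1}=A_{m-1}$ and $A_m=p_m^d-p_{m-1}^d$ (both immediate from $p_m=p_{m-1}^d+c$) produce the factorization
\[
G_d(m,1)(c)=\frac{1}{c^{d-1}}\sum_{i=0}^{d-1} p_m(c)^i\,p_{m-1}(c)^{d-1-i}.
\]
Since each $p_j(c)=c+O(c^2)$, every summand equals $c^{d-1}+O(c^d)$, so the sum is $d\,c^{d-1}+O(c^d)$, giving constant term exactly $d$. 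Eisenstein at $d$ then forces irreducibility.

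For case (ii), set $A_m^{(2)}(c):=p_m^2+p_m+c+1$. Using $p_{m+1}=p_m^2+c$, a direct computation produces the key identity $A_{m+1}^{(2)}=(p_m^2+c+1)^2-p_m^2=A_m^{(2)}\cdot R_{m+1}$, where $R_{m+1}:=p_m^2-p_m+c+1$. Hence $G_2(m,2)(c)=R_m(c)$ for $m$ even and $R_m(c)/(c+1)$ for $m$ odd. After $c=w-1$, induction in characteristic $2$ gives $A_m^{(2)}(c)\equiv (c+1)^{2^m}\pmod 2$, so $R_m(w-1)\equiv w^{2^{m-1}}\pmod 2$, handling the non-leading coefficients. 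For the constant term, note that $c=-1$ is the Misiurewicz point of type $(0,2)$: the critical orbit is the $2$-cycle $\{0,-1\}$, so $p_j(-1)=-1$ for $j$ odd and $p_j(-1)=0$ for $j$ even, and (from the recursion) $p_j'(-1)=1$ or $-1$ respectively. For $m$ even this gives $R_m(-1)=1-(-1)=2$; for $m$ odd we have $R_m(-1)=0$ but $R_m'(-1)=-p_{m-1}'(-1)+1=2$, so $R_m/(c+1)$ still evaluates to $2$ at $c=-1$. Eisenstein at $2$ concludes the proof.

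The main obstacle is case (ii), where the natural ``center'' $c=-1$ for the Newton polygon is not visible from the raw polynomials $\Phi^*_{f,m,n}$, and where dividing by $c+1$ in the odd-$m$ case could in principle raise the $2$-adic valuation of the constant term above $1$ and destroy the Eisenstein property. Both issues are dissolved by the telescoping identity $A_{m+1}^{(2)}=A_m^{(2)}\cdot R_{m+1}$, which is the period-$2$ analogue of the factorization used in case (i) and which reduces the analysis at $c=-1$ to the first-order behaviour of $p_{m-1}$ there.
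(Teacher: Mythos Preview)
Your argument is correct and complete (modulo the trivial observation that $m=1$ gives a degree-$0$ polynomial in both cases, where your constant-term computation does not apply but nothing needs to be proved). The telescoping identities $A_m=p_m^d-p_{m-1}^d$ and $A_{m+1}^{(2)}=A_m^{(2)}R_{m+1}$ are exactly what is needed, and the evaluations at $c=0$ and $c=-1$ go through as you describe.

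However, the route is genuinely different from the paper's. The paper does not touch the polynomials $G_d(m,n)$ directly; instead it first proves Theorem~1.3, which says that in $\mathcal{O}_K$ with $K=\mathbb{Q}(c)$ one has $(a_i)^A=(d)$ for the appropriate exponent $A$, and then observes via $\sum e_if_i=[K:\mathbb{Q}]$ that this forces $[K:\mathbb{Q}]\geq A=\deg G_d(m,n)$, hence equality. In other words, the paper shows that $d$ is totally ramified in $K$ and reads off irreducibility from that. Your Eisenstein argument is of course the polynomial shadow of the same phenomenon (Eisenstein at $p$ is precisely total ramification of $p$), but it bypasses the machinery of rigid divisibility sequences, Lemma~2.4, and the orbit-ideal analysis entirely. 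What you gain is a short self-contained proof of the corollary; what the paper's approach gains is the stronger structural statement of Theorem~1.3 itself, including the identification of $(a_i)$ as the unique prime above $d$ (Remark~2.5), which is then reused for Corollary~1.2 and Corollary~1.5.
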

We also would like to mention that this corollary inspired a subsequent paper of Buff et al. (\cite{BEK}), where they extended this result by proving that for $k\geq 1$, both $G_{p^k}(m,1)$ and $G_{p^k}(m,2)$ have precisely $k$ different irreducible factors for $m\geq 1$ (\cite{BEK}, Theorem $3$ and Corollary $4$). They also proved that $G_2(m,3)$ is irreducible for $m\geq 1$ (\cite{BEK}, Corollary $5$).\\\\
Before giving the next corollary to our main theorems, we recall a definition from the theory of polynomial iteration: Let $F$ be a field, and $f(x)\in F[x]$ be a polynomial over $F$. We say that \textit{f is stable over F} if $f^n(x)$ is irreducible over $F$ for all $n\geq 1$.

\begin{corollary}
	Let $d$ be a prime, and suppose $f_{c,d}(x) = x^d+c$ has exact type $(m,1)$ for some $m\neq 0$. Set $K=\mathbb{Q}(c)$. Then, $f_{c,d}$ is stable over $K$.
\end{corollary}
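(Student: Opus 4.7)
The plan is to reduce stability to a non-$d$-th-power condition on the elements of the critical orbit of $f := f_{c,d}$, and then to invoke the arithmetic structure of that orbit supplied by the paper's main theorems. Since $d$ is prime, a standard Capelli-type induction shows that $f$ is stable over $K := \mathbb{Q}(c)$ as soon as one checks
\[
(-1)^{d^{n-1}} f^n(0) \notin K^d \qquad \text{for every } n \geq 1.
\]
Indeed, assuming inductively that $f^{n-1}$ is irreducible with root $\alpha$ and setting $L := K(\alpha)$, the roots of $f^n$ over $L$ satisfy $y^d = \alpha - c$; as $d$ is prime, this equation defines a degree-$d$ extension of $L$ precisely when $\alpha - c \notin L^d$, and the norm identity $N_{L/K}(\alpha - c) = (-1)^{d^{n-1}} f^{n-1}(c) = (-1)^{d^{n-1}} f^n(0)$ forces the required non-$d$-th-power condition on $L$ from the displayed condition on $K$.

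Because $f$ has exact type $(m,1)$, the element $\gamma := f^m(0)$ is a fixed point of $f$, so $f^n(0) = \gamma$ for all $n \geq m$. The conditions to verify therefore split into a \emph{tail} part, $(-1)^{d^{n-1}} f^n(0) \notin K^d$ for $1 \leq n \leq m-1$, and a \emph{periodic} part, $(-1)^{d^{n-1}} \gamma \notin K^d$ for $n \geq m$. (For odd prime $d$ the sign is always $-1$; for $d=2$ it is $-1$ only at $n=1$.) Each of these is exactly the sort of non-$d$-th-power statement that the paper's main results, phrased in the language of rigid divisibility properties of the critical orbit, are designed to produce: for the orbit element under consideration one locates a prime of the ring of integers of $K$ at which its valuation is a positive integer not divisible by $d$.

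The main obstacle lies in the periodic part. There is essentially only one element, $\gamma$, to rule out, and the only relation available is the fixed-point equation $\gamma^d = \gamma - c$; extracting a prime of $K$ with the required non-$d$-divisible valuation on $\gamma$ therefore demands a careful local analysis of the Misiurewicz parameter $c$ and of the extension $\mathbb{Q}(\gamma) = \mathbb{Q}(c)$. By contrast, the tail part ($1 \leq n \leq m-1$) is expected to be relatively direct, since the successive orbit elements $f^n(0)$ carry enough independent primes of first-order (or more generally non-$d$-divisible) valuation for the non-$d$-th-power test to go through almost automatically from the divisibility structure.
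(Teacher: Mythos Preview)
Your reduction step is correct and matches the paper: via the criterion of Hamblen--Jones--Madhu (Theorem~8 of \cite{Hamblen}), stability of $f_{c,d}$ over $K$ follows once no element of the critical orbit is a $\pm d$-th power in $K$. But the rest of your proposal does not actually carry out this verification; it only sketches a plan and, in doing so, misidentifies where the content lies.

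The tail/periodic split is a red herring here. Since $n=1$, the hypothesis $n\mid i$ of Theorem~1.3(b) holds for \emph{every} $i$, so the theorem gives the uniform conclusion
\[
(a_i)^{A} = (d), \qquad A = (d^{m-1}-1)(d-1),
\]
for all $1\le i\le m$. There is no separate ``periodic obstacle'' requiring a careful local analysis of the fixed-point equation $\gamma^d=\gamma-c$; the ideal structure of $\gamma=a_m$ is identical to that of every tail element. Combining this with the degree computation $\deg G_d(m,1)=(d^{m-1}-1)(d-1)$ (as in the proof of Corollary~1.1) forces $d$ to be totally ramified in $K$ with $(a_i)$ equal to the unique prime above $d$. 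Hence each $a_i$ is a prime element of $\mathcal{O}_K$ and in particular cannot be a $\pm d$-th power. This is exactly the paper's one-line argument via Remark~2.5, and it replaces both the ``tail'' and ``periodic'' parts of your outline at once.

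So the gap is that you never invoke Theorem~1.3(b) (or the resulting total ramification of $d$) concretely; instead you gesture at ``rigid divisibility properties'' and then declare the fixed point to be the hard case. Once you use the actual output of the main theorem, the non-$d$-th-power check is immediate and uniform across the whole orbit.
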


Note that the simplest example of Corollary $1.2$ is the polynomial $f_{-2,2}(x) = x^2-2\in \mathbb{Q}[x]$, which is already well-known to be stable. Thus, Corollary $1.2$ can be thought of as a generalization of this well-known example.\\

We also would like to say a few words about why the stability question is harder when $f_{c,d}$ has exact type $(m,n)$ with $n>1$: By a result of Hamblen et al. (\cite{Hamblen}, Theorem $8$), proving the stability of $f_{c,d}$ comes down to show that there are no $\pm d$th powers in the critical orbit. However, when $n>1$, one of our main theorems implies that there always exist some unit elements in the critical orbit, and checking if these units are $\pm d$th powers or not appear to be a difficult problem.\\

Both Corollary $1.1$ and Corollary $1.2$ will follow from the next theorem, which establishes that the critical orbit elements for the PCF polynomials $f_{c,d}$ satisfy suprisingly strong properties when $d$ is a prime. We first fix the following notation, which we will also use throughout the paper:\\

Let $K$ be a number field, and $\mathcal{O}_K$ its ring of integers. Take $a\in \mathcal{O}_K$. Throughout, we will use $(a)$ to denote the ideal $a\mathcal{O}_K$. Also, for $f_{c,d}$ with exact type $(m,n)$, we will use $O_{f_{c,d}}=\{a_1,a_2,\dots,a_{m+n-1}\}$ to denote the critical orbit, where we set $a_i=f_{c,d}^i(0)$. Whenever we use $a_i$ for some $i>m+n-1$, we again obtain it by setting $a_i = f_{c,d}^i(0)$ and using periodicity of $f_{c,d}$.\\
\begin{theorem}
	Let $f_{c,d}(x)=x^d+c \in \bar{\mathbb{Q}}[x]$ be a PCF polynomial having exact type $(m,n)$ with $m\geq 1$. Set $K=\mathbb{Q}(c)$, and let $O_{f_{c,d}} = \{a_1, a_2,\dots , a_{m+n-1}\} \subset \mathcal{O}_K$ be the critical orbit of $f_{c,d}$. Then the following holds:\\
	
	\item[(a)] If $n \not|$ $i$, then $a_i$ is a unit.
	\item[(b)] If $d$ is a prime and $ n \text{ }| \text{ }i$, then one has $(a_i)^A = (d)$, where
	$$
	A = \left\{
	\begin{array}{ll}
	d^{m-1}(d-1) & \text{if } n  \not|  \text{ }m-1 \\
	(d^{m-1}-1)(d-1) & \text{if } n  \text{ }|  \text{ }m-1.
	\end{array}
	\right.
	$$
\end{theorem}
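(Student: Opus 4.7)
The plan is to analyze the critical orbit one prime ideal at a time in $\mathcal{O}_K$, exploiting both the recurrence $a_{k+1}=a_k^d+c$ and the PCF relation $a_m=a_{m+n}$. The starting observation is a shift relation: if $\mathfrak{p}$ is a prime ideal of $\mathcal{O}_K$ and $\mathfrak{p}\mid a_i$, then iterating $a_{i+k+1}=a_{i+k}^d+c\equiv a_k^d+c=a_{k+1}\pmod{\mathfrak{p}}$ gives $a_{i+j}\equiv a_j\pmod{\mathfrak{p}}$ for all $j\geq 0$. Hence $S_{\mathfrak{p}}:=\{j\geq 1:\mathfrak{p}\mid a_j\}$ is closed under addition, the reduction of the orbit modulo $\mathfrak{p}$ is purely periodic (starting from $\bar a_0=0$) with period $i_0:=\min S_{\mathfrak{p}}$, and $S_{\mathfrak{p}}=i_0\mathbb{Z}_{>0}$. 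Since $a_m=a_{m+n}$ forces the reduced period to divide $n$, one immediately gets $i_0\mid n$.

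For part (a), what remains is the rigidity statement that $i_0=n$ whenever $S_{\mathfrak{p}}$ is non-empty. I would exploit the identity $a_{m-1}^d=a_{m+n-1}^d$ (from $a_m=a_{m+n}$), which forces $(a_{m-1})=(a_{m+n-1})$ as ideals of $\mathcal{O}_K$ and yields $a_{m+n-1}=\zeta a_{m-1}$ for some $d$-th root of unity $\zeta$ in $\bar{\mathbb{Q}}$, together with the exact-type minimality $a_{m-1}\neq a_{m+n-1}$ which forces $\zeta\neq 1$. If $i_0<n$, then in the reduced orbit one has $\bar a_{m-1}=\bar a_{m-1+i_0}$; unpacking this with the $d$-th power rigidity gives $\mathfrak{p}\mid(\zeta-1)a_{m+n-1}$, so either $\mathfrak{p}\mid(\zeta-1)$ or $\mathfrak{p}\mid a_{m+n-1}$. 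In the latter case $i_0\mid(m-1)$ follows from $i_0\mid n$, and iterating this backwards along the tail using the analogous identity at each earlier level contradicts the minimality of $m$. I expect this rigidity step to be the main obstacle, as it requires delicate case analysis and must genuinely use the exact-type hypothesis beyond the shift relation alone.

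For part (b), granting (a), any prime $\mathfrak{p}$ dividing some $a_i$ has $S_{\mathfrak{p}}=n\mathbb{Z}_{>0}$ and the reduced orbit has exact period $n$. Since $d$ is prime and $\zeta\neq 1$, the root of unity $\zeta$ appearing above is now \emph{primitive} of order $d$. Passing to $L=K(\zeta)$ and choosing a prime $\mathfrak{P}$ of $\mathcal{O}_L$ above $\mathfrak{p}$, the standard cyclotomic identity gives $v_{\mathfrak{P}}(\zeta-1)=v_{\mathfrak{P}}(d)/(d-1)$. I would then track the differences $\delta_k:=a_{n+k}-a_k$ via the binomial expansion $\delta_{k+1}=d\,a_k^{d-1}\delta_k+\binom{d}{2}a_k^{d-2}\delta_k^2+\cdots+\delta_k^d$, which propagates $\mathfrak{P}$-adic valuations geometrically with ratio $d$ so long as $a_k$ is a unit at $\mathfrak{P}$. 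Matching the boundary identity $\delta_{m-1}=(\zeta-1)a_{m-1}$ against this recurrence with initial value $\delta_0=a_n$ pins down $v_{\mathfrak{P}}(a_n)$ in terms of $v_{\mathfrak{P}}(d)$. The split into two subcases for $A$ arises exactly according to whether $a_{m-1}$ is a unit at $\mathfrak{P}$ (case $n\nmid m-1$, yielding $A=d^{m-1}(d-1)$) or itself has positive $\mathfrak{P}$-valuation equal to $v_{\mathfrak{P}}(a_n)$ (case $n\mid m-1$, yielding $A=(d^{m-1}-1)(d-1)$). A secondary technical point is verifying that the geometric growth regime persists through step $m-1$, which amounts to the consistency check that under the ansatz dictated by the target formula, the leading binomial term dominates at each intermediate step.
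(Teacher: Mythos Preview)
Your approach to part~(a) has a genuine gap at the rigidity step where you aim to show $i_0=n$. The relation $\mathfrak{p}\mid(\zeta-1)a_{m-1}$ follows simply from $\bar a_{m-1}=\bar a_{m+n-1}$, which holds because $i_0\mid n$; but this is true regardless of whether $i_0<n$ or $i_0=n$, so it cannot by itself produce a contradiction. In the subcase $\mathfrak{p}\mid a_{m-1}$, your proposed ``iteration backwards along the tail using the analogous identity at each earlier level'' has no content: for $k<m-1$ there is no identity $a_k^d=a_{k+n}^d$ (indeed $a_{k+1}\neq a_{k+n+1}$ by exact type), and merely knowing $\mathfrak{p}\mid a_{m-1}$ contradicts nothing about the minimality of $m$, which is a statement in $\mathcal{O}_K$ rather than modulo $\mathfrak{p}$. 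The paper handles (a) by a completely different, global argument: it cites the rigid divisibility of $\{a_i\}$ (Hamblen--Jones--Madhu) to get $(a_i)=(a_{(i,n)})$, reduces to the periodic window $m\le k\le m+n-1$, and then uses that these $a_k$ are all roots of $f_{c,d}^n(x)-x$, whose constant term is $a_n$. The resulting product relation, together with $(a_{k^*})=(a_n)$ for the unique $k^*$ in that window with $n\mid k^*$, forces every other $a_k$ there to be a unit.

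For part~(b), your $\delta_k$-recurrence is a genuinely different route from the paper's, which instead expands the vanishing sum $\sum_{i=0}^{d-1}[f_{c,d}^{m-1}(a_{nk})]^i a_{m-1}^{d-1-i}=0$ via Lemma~2.1 and the hockey-stick identity to obtain the single relation $a_{nk}^{d^{m-1}(d-1)}=d(-a_{m-1}^{d-1}+\alpha a_{nk}^d)$ directly. Your version is plausible but, as written, circular: you only check that the geometric-growth regime is \emph{consistent} with the target value of $v_{\mathfrak P}(a_n)$, not that this value is forced (if growth failed at some earlier step the boundary match would be different). You also silently assume $v_{\mathfrak P}(a_{m-1})=v_{\mathfrak P}(a_n)$ in the case $n\mid m-1$, which is exactly the valuation-level rigid divisibility statement your shift relation alone does not supply.
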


Having stated Theorem $1.3$, two remarks are in order here:\\

 Firstly, taking $i=1$ in Theorem $1.3$, it follows that $a_1 = c$ is always a unit unless $n=1$, which is what is proven in (\cite{Buff}, Proposition $2$). Hence, our theorem generalizes this result of Buff.\\

 Secondly, our proof of the part $n |$$i$ of Theorem $1.3$ only works when $d$ is a prime. In fact, it is easy to come up with counterexamples for this part of the statement when $d$ is not a prime. For example, taking $(d,m,n) = (4,3,1)$, MAGMA gives that $(a_i)^{60} = (4)$ for all $i$, although Theorem $1.3$ would imply $(a_i)^{45} = (4)$. However, when $d$ is a prime power, based on MAGMA computations, perhaps interestingly, it appears that some power of $(a_i)$ gives the ideal $(d)$ for all $i$ divisible by $n$. See Appendix for some more details about these computations. The question of whether for all prime powers $d$ such a power exists or not remains open.

In our next theorem, we are able to get rid of the condition that $d$ is a prime. However, it comes with the price that we do not get as much information as in Theorem $1.3$.

\begin{theorem}
	Let $f_{c,d}(x) = x^d+c \in \bar{\mathbb{Q}}[x]$ be a PCF polynomial with exact type $(m,n)$. Suppose $m\neq 0$. Set $K=\mathbb{Q}(c)$, and let $O_{f_{c,d}} = \{a_1, a_2,\dots , a_{m+n-1}\} \subset \mathcal{O}_K$ be the critical orbit of $f_{c,d}$. Then, $(a_i) | (d)$ for all $1\leq i \leq m+n-1$.
\end{theorem}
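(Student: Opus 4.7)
The strategy is to argue by contradiction: suppose some prime $\mathfrak{p}$ of $\mathcal{O}_K$ with $\mathfrak{p} \nmid (d)$ divides $(a_i)$ for some $1 \leq i \leq m+n-1$. From $a_i \equiv 0 \pmod{\mathfrak{p}}$ and the recursion $a_{j+1} = a_j^d + c$, we get $a_{i+j} \equiv a_j \pmod{\mathfrak{p}}$ for every $j \geq 1$, so the mod-$\mathfrak{p}$ orbit is purely periodic from index $1$; let $n'$ be its minimal period. Since $a_{m+n} = a_m$, we have $n' \mid n$, and from $a_{1+n'} \equiv a_1 \pmod{\mathfrak{p}}$ one derives $a_{n'}^d \equiv 0 \pmod{\mathfrak{p}}$, hence $a_{n'} \equiv 0 \pmod{\mathfrak{p}}$; a minimality argument then shows $\{j \geq 1 : \mathfrak{p} \mid a_j\} = n'\mathbb{Z}_{>0}$.

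Because $a_m = a_{m+n}$ while $a_{m-1} \neq a_{m+n-1}$, the element $t := a_{m-1}/a_{m+n-1} \in K$ is a nontrivial $d$-th root of unity. The classical identity $d = \prod_{k=1}^{d-1}(1-\zeta_d^k)$ in $\mathbb{Z}[\zeta_d]$ shows that $(1-t) \mid (d)$ in $\mathcal{O}_K$. By mod-$\mathfrak{p}$ periodicity (using $n' \mid n$), $a_{m-1} \equiv a_{m+n-1} \pmod{\mathfrak{p}}$, so $\mathfrak{p} \mid (t-1)\,a_{m+n-1}$. If $\mathfrak{p} \mid (t-1)$, then $\mathfrak{p} \mid (d)$, contradiction; otherwise $\mathfrak{p} \mid a_{m+n-1}$, whence $n' \mid m+n-1$, and combined with $n' \mid n$ also $n' \mid m-1$.

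The rest is a valuation calculation. If $n' = 1$, every $a_j$ is divisible by $\mathfrak{p}$; localizing at $\mathfrak{p}$ with uniformizer $\pi$ and setting $e := v_\mathfrak{p}(c)$, one checks $v_\mathfrak{p}(a_j) = e$ for all $j \geq 1$, so writing $a_j = \pi^e u_j$ the recursion becomes $u_{j+1} = \pi^{(d-1)e} u_j^d + u_1$, forcing $u_j \equiv u_1 \pmod{\mathfrak{p}}$ for all $j$; then $t = u_{m-1}/u_{m+n-1} \equiv 1 \pmod{\mathfrak{p}}$, contradicting $\mathfrak{p} \nmid (t-1)$. If $n' \geq 2$, set $e := v_\mathfrak{p}(a_{m-1}) > 0$ and prove by induction on $k = 1, \ldots, n'$ that $v_\mathfrak{p}(a_{m-k} - a_{m+n-k}) = e$. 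The base case is $a_{m-1} - a_{m+n-1} = (t-1)\,a_{m+n-1}$. The inductive step uses the factorization
\[
a_{m-k} - a_{m+n-k} = (a_{m-k-1} - a_{m+n-k-1}) \cdot S_{k+1}, \quad S_{k+1} := \sum_{j=0}^{d-1} a_{m-k-1}^{d-1-j}\,a_{m+n-k-1}^{j},
\]
together with the congruence $m \equiv 1 \pmod{n'}$: for $1 \leq k \leq n'-1$, neither index $m-k-1$ nor $m+n-k-1$ is a multiple of $n'$, so the corresponding $a$-values are $\mathfrak{p}$-units congruent to each other mod $\mathfrak{p}$, giving $S_{k+1} \equiv d\cdot a_{m-k-1}^{d-1} \not\equiv 0 \pmod{\mathfrak{p}}$ and $v_\mathfrak{p}(S_{k+1}) = 0$.

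Applying the same factorization once more, at $k = n'$, delivers the contradiction. Now $m - n' - 1 \equiv 0 \pmod{n'}$, so both $a_{m-n'-1}$ and $a_{m+n-n'-1}$ have $v_\mathfrak{p} \geq e$ (with the edge case $m = n'+1$, in which $a_{m-n'-1} = a_0 = 0$, handled by the direct computation $S_{n'+1} = a_n^{d-1}$). Every term of $S_{n'+1}$ then has $v_\mathfrak{p} \geq (d-1)e$, so the factorization yields
\[
e = v_\mathfrak{p}(a_{m-n'} - a_{m+n-n'}) \geq e + (d-1)e = de,
\]
which is impossible for $d \geq 2$ and $e \geq 1$. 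The main obstacle is organizing the valuation chain correctly: for $1 \leq k \leq n'-1$ the factor $S_{k+1}$ is a $\mathfrak{p}$-unit, but at $k = n'$ the indices re-enter the zero slot of the mod-$\mathfrak{p}$ cycle, and this forced transition is precisely what produces the contradictory inequality $de \leq e$.
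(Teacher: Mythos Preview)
Your argument is correct and takes a genuinely different route from the paper. The paper first proves that $a_i$ is a unit whenever $n\nmid i$ (Theorem~1.3(a), via rigid divisibility and factoring $f_{c,d}^n(x)-x$), and then for $n\mid i$ expands $f_{c,d}^{m-1}(a_{nk})$ explicitly (Lemma~2.1 plus the hockey-stick identity) to obtain a global relation $d\,a_{m-1}^{d-1}+a_{nk}^{d^{m-1}(d-1)}=\alpha\,a_{nk}^d$, from which $(a_{nk})\mid(d)$ is read off after a case split on whether $n\mid m-1$. You instead work one prime $\mathfrak{p}\nmid(d)$ at a time, exploit the $d$-th root of unity $t=a_{m-1}/a_{m+n-1}$ together with $(1-t)\mid d$, and run a $\mathfrak{p}$-adic valuation chain down to a contradiction. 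Your approach is more self-contained --- it does not rely on the separate unit result or on the polynomial expansion --- while the paper's approach produces a single algebraic identity that simultaneously yields the sharper Theorem~1.3(b) when $d$ is prime.

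One step deserves an extra sentence of justification. At the end you assert that $a_{m-n'-1}$ and $a_{m+n-n'-1}$ have $v_{\mathfrak{p}}\ge e$, where $e=v_{\mathfrak{p}}(a_{m-1})$. Merely knowing that their indices are multiples of $n'$ gives $v_{\mathfrak{p}}\ge 1$, not $\ge e$; what you need is that $v_{\mathfrak{p}}(a_{jn'})$ is constant in $j\ge 1$. This is part of the rigid divisibility property (the paper's Lemma~2.4, or \cite{Hamblen}, Lemma~12), or can be checked directly: from $a_{n'+k}\equiv a_k\pmod{a_{n'}^d}$ (an easy induction on $k$) one gets $v_{\mathfrak{p}}(a_{2n'}-a_{n'})\ge d\,v_{\mathfrak{p}}(a_{n'})>v_{\mathfrak{p}}(a_{n'})$, hence $v_{\mathfrak{p}}(a_{2n'})=v_{\mathfrak{p}}(a_{n'})$, and so on. With this in place, both factors on the right of $a_{m-n'}-a_{m+n-n'}=(a_{m-n'-1}-a_{m+n-n'-1})\,S_{n'+1}$ have $v_{\mathfrak{p}}\ge e$ and $\ge (d-1)e$ respectively (also in the edge case $m=n'+1$, since then $v_{\mathfrak{p}}(a_n)=e$), and the inequality $e\ge de$ gives the desired contradiction.
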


Theorem $1.4$ has an application to the Galois theory of polynomial iterates, which we state as our next corollary:
\begin{corollary}
	Let $f_{c,d}(x) = x^d+c \in \bar{\mathbb{Q}}[x]$ be a PCF polynomial with exact type $(m,n)$. Suppose $m\neq 0$. Set $K=\mathbb{Q}(c)$, let $K_n$ be the splitting field of $f_{c,d}^n(x)$ over $K$. If a prime $\mathfrak{p}$ of $K$ ramifies in $K_n$, then $\mathfrak{p} | $$(d)$.
\end{corollary}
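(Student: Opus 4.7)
The plan is to reduce the ramification question to a divisibility statement about the discriminant of $f_{c,d}^n(x)$, and then invoke Theorem $1.4$. Recall the standard fact that if $g \in \mathcal{O}_K[x]$ is a separable monic polynomial with splitting field $L$ over $K$, then any prime $\mathfrak{p}$ of $\mathcal{O}_K$ that ramifies in $L$ must divide the principal ideal $(\operatorname{disc}(g))$: indeed, $\mathfrak{p} \nmid (\operatorname{disc}(g))$ forces $\mathfrak{p}$ to be unramified in each simple extension $K(\alpha)/K$ for $\alpha$ a root of $g$, and unramifiedness is preserved under compositum. So the corollary will follow once I show that every prime divisor of $(\operatorname{disc}(f_{c,d}^n))$ in $\mathcal{O}_K$ divides $(d)$. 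A preliminary observation: $f_{c,d}^n$ is separable, since $m \geq 1$ forces $0$ to be strictly preperiodic, hence $a_j \neq 0$ for every $j \geq 1$, which rules out any common root of $f_{c,d}^n$ and $(f_{c,d}^n)'$.

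Next I would compute $\operatorname{disc}(f_{c,d}^n)$ explicitly. By the chain rule, $(f_{c,d}^n)'(x) = d^n \prod_{i=0}^{n-1} f_{c,d}^i(x)^{d-1}$. Writing the discriminant as $\pm \operatorname{Res}(f_{c,d}^n, (f_{c,d}^n)')$ and expanding it as a product over the $d^n$ roots $\alpha$ of $f_{c,d}^n$, the crucial computation is
\[
\prod_{\alpha} f_{c,d}^i(\alpha) \;=\; \pm\, a_{n-i}^{\,d^i},
\]
which follows from the fact that $f_{c,d}^i$ maps the roots of $f_{c,d}^n$ onto the roots of $f_{c,d}^{n-i}$ with each image having exactly $d^i$ preimages, together with the constant-term formula for the monic polynomial $f_{c,d}^{n-i}$ (whose constant term is $a_{n-i}$). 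Assembling the pieces yields
\[
\operatorname{disc}(f_{c,d}^n) \;=\; \pm\, d^{\,n d^n} \prod_{j=1}^{n} a_j^{\,d^{n-j}(d-1)}.
\]

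The final step is a direct application of Theorem $1.4$: since $m \geq 1$ we have $1 \leq j \leq n \leq m+n-1$, so $(a_j) \mid (d)$ for every $j$ appearing in the product. Hence the ideal $(\operatorname{disc}(f_{c,d}^n))$ divides a power of $(d)$, and every prime ideal dividing it must divide $(d)$, as required.

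The only real obstacle is the bookkeeping in the discriminant computation, particularly getting the preimage-multiplicity count $d^i$ correct so that the exponents work out. Once the formula is in hand, Theorem $1.4$ finishes the proof immediately.
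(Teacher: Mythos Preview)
Your proof is correct and follows the same overall strategy as the paper: show that every prime dividing $\operatorname{disc}(f_{c,d}^n)$ divides $(d)$ or some $(a_j)$, and then invoke Theorem~1.4. The only difference is in how the discriminant is handled. The paper quotes the recursive relation $\triangle_n = \pm\,\triangle_{n-1}\, d^{d^n} a_n$ from Jones (Lemma~2.6) and argues inductively, whereas you compute $\operatorname{disc}(f_{c,d}^n)$ directly via the chain rule and the resultant, obtaining the closed form $\pm\, d^{nd^n}\prod_{j=1}^n a_j^{d^{n-j}(d-1)}$. Your route is a bit more self-contained (Jones's paper treats the quadratic case, so the citation requires the obvious generalization), and your separability check for $f_{c,d}^n$ via $a_j\neq 0$ is a clean addition; otherwise the two arguments are the same in substance.
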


\section{Proofs of Main Results}

We first start by stating the following lemma, which we will use throughout the paper. Although it is simple, it becomes suprisingly useful.
\begin{lemma}
	Let $f_{c,d}(x) = x^d + c \in \bar{\mathbb{Q}}[x]$ be a PCF polynomial with exact type $(m,n)$. Set $K=\mathbb{Q}(c)$, and let $O_{f_{c,d}} =   \{a_1,a_2,\dots,a_{m+n-1}\}\subset \mathcal{O}_K$ be the critical orbit of $f$. Then for any $i,j\geq 1$, there exists a polynomial $P_{i,j}(t)\in \mathbb{Z}[t]$ such that $f_{c,d}^i(a_j) = a_j^{d^i}+{a_j}^dP_{i,j}(c)+a_k$, where $k$ is the integer satisfying $1\leq k \leq j$ and $k\equiv i $$($mod $ j)$. Moreover, if $d$ is a prime, we have $\frac{P_{i,j}(t)}{d}\in \mathbb{Z}[t]$.
\end{lemma}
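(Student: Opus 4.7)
The plan is to establish, by induction on $i\geq 1$, the stronger factorization
\[
f_{c,d}^i(x) = a_i + x^d\,S_i(x,c), \qquad S_i(x,c)\in\mathbb{Z}[x,c],
\]
where $S_i$ has degree $d^i-d$ in $x$ and is monic in its top-degree term. The base case $i=1$ is immediate from $f_{c,d}(x)=c+x^d=a_1+x^d\cdot 1$. For the inductive step I would use $f_{c,d}^i(x)=(f_{c,d}^{i-1}(x))^d+c=(a_{i-1}+x^d S_{i-1}(x,c))^d+c$ and expand by the binomial theorem: the $a_{i-1}^d$ summand combines with $c$ to produce $a_{i-1}^d+c=a_i$, while every other summand is visibly divisible by $x^d$. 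This yields the explicit recursion
\[
S_i(x,c)=\sum_{k=1}^{d-1}\binom{d}{k}a_{i-1}^{d-k}x^{d(k-1)}S_{i-1}(x,c)^k+x^{d(d-1)}S_{i-1}(x,c)^d,
\]
from which $S_i\in\mathbb{Z}[x,c]$ and the assertion on the leading term follow.

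Setting $x=a_j$ and splitting off the leading monomial of the middle factor gives
\[
f_{c,d}^i(a_j)=a_j^{d^i}+a_j^d\bigl(S_i(a_j,c)-a_j^{d^i-d}\bigr)+a_i.
\]
Because $a_j\in\mathbb{Z}[c]$, the bracketed expression defines a polynomial $P_{i,j}(c)\in\mathbb{Z}[c]$, and the trailing $a_i$ is precisely the $a_k$ of the statement once its index is reduced into the prescribed range via the periodicity of the critical orbit.

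For the ``moreover'' clause, when $d$ is prime I would strengthen the induction to prove $S_i(x,c)\equiv x^{d^i-d}\pmod{d\mathbb{Z}[x,c]}$. Every binomial coefficient $\binom{d}{k}$ with $1\le k\le d-1$ is divisible by $d$, so every term in the recursion for $S_i$ except the last $x^{d(d-1)}S_{i-1}^d$ already lies in $d\mathbb{Z}[x,c]$. For the last term, the induction hypothesis writes $S_{i-1}=x^{d^{i-1}-d}+d\,T_{i-1}$ with $T_{i-1}\in\mathbb{Z}[x,c]$, and the Frobenius-type identity $(A+dB)^d\equiv A^d\pmod{d}$ in $\mathbb{Z}[x,c]$ then yields $S_{i-1}^d\equiv x^{d(d^{i-1}-d)}\pmod{d}$, so $x^{d(d-1)}S_{i-1}^d\equiv x^{d^i-d}\pmod{d}$. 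Combining the pieces gives $S_i-x^{d^i-d}\in d\mathbb{Z}[x,c]$, and evaluating at $x=a_j$ delivers $P_{i,j}(t)/d\in\mathbb{Z}[t]$. The main obstacle is the bookkeeping of the leading monomial of $S_{i-1}^d$ in the Frobenius step; everything else reduces to the binomial theorem plus induction.
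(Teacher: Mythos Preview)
Your overall strategy---factor $f_{c,d}^i(x)=a_i+x^dS_i(x,c)$ with $S_i\in\mathbb{Z}[x,c]$ monic of $x$-degree $d^i-d$, then evaluate at $x=a_j$---is exactly what the paper does. There is, however, a genuine gap in the step where you pass from the trailing $a_i$ to the $a_k$ of the statement. You justify this ``via the periodicity of the critical orbit,'' but the orbit is eventually periodic with period $n$ (and tail length $m$), not with period $j$; for $i>j$ one does \emph{not} have $a_i=a_k$ in $\mathcal{O}_K$. What is actually needed is that the difference $a_i-a_k$ lies in $a_j^d\,\mathbb{Z}[c]$, so that it can be absorbed into the middle term. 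This follows from your own factorization: since $a_i=f_{c,d}^{\,i-j}(a_j)=a_{i-j}+a_j^d\,S_{i-j}(a_j,c)$, one gets $a_i-a_{i-j}\in a_j^d\,\mathbb{Z}[c]$, and iterating this descent until the index lands in $\{1,\dots,j\}$ gives $a_i-a_k=a_j^d\,G(c)$ for some $G\in\mathbb{Z}[t]$. The paper carries out precisely this reduction; you have all the ingredients for it but invoked the wrong mechanism.

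On the ``moreover'' clause your inductive argument that $S_i(x,c)\equiv x^{d^i-d}\pmod{d}$ is correct and in fact more explicit than the paper's one-sentence appeal to binomial coefficients. A word of caution, though: once you absorb the correction $G(c)$ above into $P_{i,j}$, the divisibility by $d$ is no longer automatic, because $S_{i-j}(a_j,c)\equiv a_j^{d^{i-j}-d}\pmod d$ need not vanish. The paper's sole application of this lemma (in the proof of Theorem~1.3) is with $i=m-1$ and $j=nk\ge m>i$, i.e.\ the case $i\le j$ where $k=i$ and no correction is required; there your argument is complete as written.
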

\begin{proof}
	Note that the consant term of $f_{c,d}^i(t)\in \mathbb{Z}[c][t]$ is $a_i$, and all the other terms are divisible by $t^d$. Write \begin{equation}
		f_{c,d}^i(t) = t^{d^i} + t^dF(t) + a_i
	\end{equation} for some $F(t)\in \mathbb{Z}[c][t]$. Plugging $a_j$ into $(2.1)$, get
	\begin{equation}
		f_{c,d}^i(a_j) = a_j^{d^i} + a_j^dF(a_j) + a_i.
	\end{equation}
	If $i\leq j$, by taking $P_{i,j}(c) = F(a_j)$, $(2.2)$ already proves the statement (recall that $a_j\in \mathbb{Z}[c]$). Suppose $i > j$. Let $i=jr+k$ for $r\geq 1$ and $1\leq k \leq j$. By the definitions of $a_i$ and $a_j$, we have \begin{equation}
		a_i = f_{c,d}^{i-j}(a_j) = a_j^dG(c)+a_k
	\end{equation} for some $G(t)\in \mathbb{Z}[c][t]$. Combining this with $(2.2)$, the first part of the statement directly follows. To prove the last statement: Note that by the definition of $f_{c,d}$ and using binomial expansion repeatedly, some ${d\choose l}$ for $1\leq l\leq d-1$ will appear in each coefficient of $P_{i,j}(t)$, which proves the result, since $d | {d\choose l}$ when $d$ is a prime.
\end{proof}
Next, we state another lemma which will be one of the ingredients in the proof of Theorem $1.3$. We first need to recall rigid divisibility sequences:

\begin{definition}
	(\cite{Hamblen}) Let $A = \{a_i\}_{i\geq 1}$ be a sequence in a field $K$. We say $A$ is a rigid divisibility sequence over $K$ if for each non-archimedean absolute value $|$ $.$ $|$ on $K$, the following hold:
	\item[(1)] If $|a_n|<1$, then $|a_n|=|a_{kn}|$ for any $k\geq 1$.
	\item[(2)] If $|a_n|<1$ and $|a_j|<1$, then $|a_{gcd(n,j)}|<1$.
\end{definition}
\begin{remark}
	Let $K$ be a number field, and $\mathcal{O}_K$ be its ring of integers. Suppose $\{a_i\}_{i\geq 1}\subset \mathcal{O}_K$ is a rigid divisibility sequence. Then, the following is a straightforward consequence of Definition $2.2$:
	\item[(i)] $(a_i)$$|$$(a_{ki})$ for all $k\geq 1$.
	\item[(ii)] $((a_i),(a_j)) = (a_{(i,j)})$.
\end{remark}

\begin{lemma}
	Let $f_{c,d}(x) = x^d + c \in \bar{\mathbb{Q}}[x]$ be a PCF polynomial with exact type $(m,n)$. Set $K=\mathbb{Q}(c)$, and let $O_{f_{c,d}} =   \{a_1,a_2,\dots,a_{m+n-1}\}\subset \mathcal{O}_K$ be the critical orbit of $f_{c,d}$. Then, $(a_i) = (a_j)$ for all $i,j$ with $(i,n) = (j,n)$.
\end{lemma}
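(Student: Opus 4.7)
The plan is to reduce the lemma to a prime-by-prime comparison of valuations, using that $\{a_i\}_{i\geq 1}$ is a rigid divisibility sequence (in the sense of Definition 2.2) over $K$.

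First, I would verify (or cite from \cite{Hamblen}) that $\{a_i\}$ is a rigid divisibility sequence. Both axioms fall out of Lemma 2.1: for axiom (1), applying the lemma with indices $((k-1)s, s)$ yields the identity $a_{ks} = a_s\bigl(a_s^{d^{(k-1)s}-1} + a_s^{d-1}P(c) + 1\bigr)$, so $a_{ks}/a_s \equiv 1 \pmod{(a_s)}$, which means this quotient is a $\mathfrak{p}$-unit whenever $v_\mathfrak{p}(a_s) > 0$, giving $v_\mathfrak{p}(a_{ks}) = v_\mathfrak{p}(a_s)$; for axiom (2), Lemma 2.1 gives $a_i \equiv a_r \pmod{(a_j)^d}$ with $r$ the residue of $i$ modulo $j$, and a standard Euclidean-algorithm induction on $\min(i,j)$ then propagates positivity of valuations down to $a_{(i,j)}$.

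Second, I would note that for each prime $\mathfrak{p}$ of $\mathcal{O}_K$ for which some $a_s$ lies in $\mathfrak{p}$, the rigid divisibility property forces $\{i \geq 1 : a_i \in \mathfrak{p}\}$ to be precisely the set of positive multiples of a well-defined ``rank of apparition'' $r_\mathfrak{p} \geq 1$. The key point I would establish is that $r_\mathfrak{p} \mid n$: pick any multiple $i_0$ of $r_\mathfrak{p}$ with $i_0 \geq m$; periodicity of the critical orbit gives $a_{i_0+n} = a_{i_0} \in \mathfrak{p}$, so $i_0 + n$ is also a multiple of $r_\mathfrak{p}$, and subtracting yields $r_\mathfrak{p} \mid n$.

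Finally, given $i, j$ with $(i,n) = (j,n) = g$, I would compare $v_\mathfrak{p}(a_i)$ and $v_\mathfrak{p}(a_j)$ prime by prime. Because $r_\mathfrak{p} \mid n$, the condition $r_\mathfrak{p} \mid i$ is equivalent to $r_\mathfrak{p} \mid g$, and likewise for $j$. Thus either both valuations vanish, or $r_\mathfrak{p} \mid g$ and axiom (1) gives $v_\mathfrak{p}(a_i) = v_\mathfrak{p}(a_{r_\mathfrak{p}}) = v_\mathfrak{p}(a_j)$. Since this holds at every prime, $(a_i) = (a_j)$. The main obstacle I foresee is a clean verification of axiom (2) of the rigid divisibility property, which requires a short induction on $\min(i,j)$ via the Euclidean algorithm rather than a single application of Lemma 2.1; if this axiom is instead invoked from \cite{Hamblen}, what remains is a tidy bookkeeping of ranks of apparition.
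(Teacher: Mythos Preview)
Your argument is correct, but it is organized differently from the paper's. The paper also begins by citing that $\{a_i\}$ is a rigid divisibility sequence (\cite{Hamblen}, Lemma 12), and then proves the equivalent statement $(a_i)=(a_{(i,n)})$ by a single global trick: choosing $k$ large enough that both $I=i(1+nk)$ and $J=I+n$ lie in the periodic part of the orbit, so $a_I=a_J$, while $\gcd(I,J)=(i,n)$; the ideal identity $((a_I),(a_J))=(a_{(I,J)})$ then gives $(a_I)=(a_{(i,n)})$, and the two divisibilities $(i,n)\mid i\mid I$ finish the job. Your route instead localizes at each prime, introduces a rank of apparition $r_{\mathfrak p}$, and uses periodicity to show $r_{\mathfrak p}\mid n$, from which the valuation of $a_i$ is seen to depend only on $(i,n)$. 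Both approaches hinge on exactly the same two inputs (rigid divisibility plus eventual $n$-periodicity), so neither is more powerful; the paper's version is a bit slicker and avoids the prime-by-prime bookkeeping, while yours makes the underlying structure (that the set of indices hitting $\mathfrak p$ is $r_{\mathfrak p}\mathbb Z_{>0}$ with $r_{\mathfrak p}\mid n$) more transparent. Your extra sketch of axioms (1) and (2) via Lemma~2.1 is fine but unnecessary once you are willing to cite \cite{Hamblen}, as the paper does.
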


\begin{proof}
	First note that the sequence $\{a_i\}$ is a rigid divisibility sequence (For a proof of this fact, see (\cite{Hamblen}, Lemma $12$)). We will now prove the lemma by showing that $(a_i) = (a_{(i, n)})$ for all $i$. Since the period is $n$, we can choose large enough integer $k$ such that $a_{i+nik} = a_{i+n(ik+1)}$. Using the second part of Remark $2.3$, we have 
	\begin{equation}
		((a_{i+nik}),(a_{i+n(ik+1)})) = (a_{(i+nik, i+n(ik+1))}) = (a_{(i,n)}).
	\end{equation} Hence, we get
	\begin{equation}
		(a_{i+nik}) = (a_{i+n(ik+1)}) = (a_{(i,n)}).
	\end{equation} Using the first part of Remark $2.3$, the equalities in $(2.5)$ give $$(a_{i+nik})=(a_{(i,n)}) | (a_i),$$ since $(i,n) | i$. On the other hand, we also have $i |$ $i+nik$, thus $(a_i) |$$(a_{i+nik}) = (a_{(i,n)})$. Combining these two, we get $(a_i) = (a_{(i,n)})$, which finishes the proof.
\end{proof}
We can finally prove Theorem $1.3$:
\begin{proof}[Proof of Theorem 1.3]
	We will prove $(1)$ and $(2)$ simultaneously. First suppose $n \not |$ $i$. Using Lemma $2.4$, we can find $m\leq j \leq m+n-1$ such that $(a_i) = (a_j)$. So, it is enough to prove the statement for $a_j$. Since the exact type is $(m,n)$, each $a_k$ for $m\leq k \leq m+n-1$ is a root of the polynomial $\phi(x) = f_{c,d}^n(x) - x \in \mathbb{Z}[c][x]$. The constant term of $\phi(x)$ is $a_n$. There exists a polynomial $P(x) \in \mathbb{Z}[c][x]$ satisfying
	\begin{equation}
		\phi(x) = (\prod_{k=m}^{m+n-1} (x-a_i))P(x).
	\end{equation} So, in particular $P(0)\in \mathbb{Z}[c]$. We also have \begin{equation}
		((-1)^n\prod_{i=m}^{m+n-1} a_i)P(0) = a_n.
	\end{equation} Note that there exists a unique $m\leq k \leq m+n-1$ such that $n |$ $k$, and applying Lemma $2.4$ to this $k$ we have $(a_k) = (a_n)$. Hence, dividing both sides of the last equation by $a_k$, right-hand side becomes a unit, which implies $a_j$ is a unit (since it appears on the left-hand side), thus $a_i$ is a unit.\\ 
	
	Now, suppose $n |$$i$. Using Lemma $2.4$, there exists an integer $k$ such that $m\leq nk \leq m+n-1$ and $(a_i) = (a_{nk})$. So, it is enough to prove the statement for $(a_{nk})$. Note that since $f$ has exact type $(m,n)$, we have \begin{equation}
		f_{c,d}^{m+nk}(0) = f_{c,d}^m(0).
	\end{equation} We also have $f_{c,d}^{m+nk}(0) = f_{c,d}^m(a_{nk})$, so we obtain \begin{equation}
		f_{c,d}^m(a_{nk}) = f_{c,d}^m(0).
	\end{equation}
	\begin{equation}
		\iff [f_{c,d}^{m-1}(a_{nk})]^d+c = [f_{c,d}^{m-1}(0)]^d+c.
	\end{equation}
	\begin{equation}
		\iff (f_{c,d}^{m-1}(a_{nk}) - f_{c,d}^{m-1}(0))(\sum_{i=0}^{d-1} [f_{c,d}^{m-1}(a_{nk})]^i[f_{c,d}^{m-1}(0)]^{d-1-i}) = 0.
	\end{equation} Because $f_{c,d}$ has exact type $(m,n)$, we get
	\begin{equation}
		\sum_{i=0}^{d-1} [f_{c,d}^{m-1}(a_{nk})]^i[f_{c,d}^{m-1}(0)]^{d-1-i} = 0.
	\end{equation}
	Using Lemma $2.1$, since $d$ is a prime, we can find a polynomial $P(t)\in \mathbb{Z}[t]$ satisfying \begin{equation}
		f_{c,d}^{m-1}(a_{nk}) = a_{nk}^{d^{m-1}}+da_{nk}^dP(c)+a_{m-1}.
	\end{equation} Putting this into $(2.12)$, we get
	\begin{equation}
		\sum_{i=0}^{d-1} (a_{nk}^{d^{m-1}}+da_{nk}^dP(c)+a_{m-1})^ia_{m-1}^{d-1-i} = 0.
	\end{equation}
	Using $(2.14)$, we can find a polynomial $Q(t)\in \mathbb{Z}[t]$ such that \begin{equation}
		\sum_{i=0}^{d-1} (a_{nk}^{d^{m-1}}+a_{m-1})^ia_{m-1}^{d-1-i}+da_{nk}^dQ(c) = 0.
	\end{equation}
	\begin{equation}
		\iff \sum_{i=0}^{d-1}(\sum_{j=0}^{i}{i\choose j}a_{nk}^{d^{m-1}j}a_{m-1}^{i-j})a_{m-1}^{d-1-i} + da_{nk}^dQ(c) = 0.
	\end{equation}
	\begin{equation}
		\iff \sum_{j=0}^{d-1} (\sum_{i=j}^{d-1} {i\choose
			j})a_{nk}^{d^{m-1}j}a_{m-1}^{d-1-j} + da_{nk}^dQ(c) = 0.
	\end{equation}
	Using the hockey-stick identity, $(2.17)$ becomes
	\begin{equation}
		\sum_{j=0}^{d-1} {d\choose j+1}a_{nk}^{d^{m-1}j}a_{m-1}^{d-1-j} + da_{nk}^dQ(c) = 0.
	\end{equation}
	Observe that all terms of $(2.18)$ except $da_{m-1}^{d-1}$ and $a_{nk}^{d^{m-1}(d-1)}$ are divisible by $da_{nk}^d$ because $d$ is a rational prime. Therefore we get
	\begin{equation}
	da_{m-1}^{d-1}+a_{nk}^{d^{m-1}(d-1)} \equiv (\text{mod }da_{nk}^d),
	\end{equation}
	and
	\begin{equation}
	a_{nk}^{d^{m-1}(d-1)} = d(-a_{m-1}^{d-1} + \alpha a_{nk}^d)
	\end{equation}
	with some $\alpha \in \mathcal{O}_K$.\\
	
	If $n\text{ }| \text{ }m-1$, then $a_{nk} = \varepsilon a_{m-1}$ with a unit $\varepsilon$, hence $(2.20)$ takes the form
	\begin{equation}
	(\varepsilon a_{m-1})^{d^{m-1}(d-1)} = d(-a_{m-1}^{d-1} + \alpha (\varepsilon a_{m-1})^d),
	\end{equation}
	and after dividing by $a_{m-1}^{d-1}$ we get
	\begin{equation}
	\varepsilon^{d^{m-1}(d-1)}a_{m-1}^{(d^{m-1}-1)(d-1)} = d(-1+\alpha(\varepsilon^d a_{m-1}))
	\end{equation}
and since the ideals $(-1+\alpha\varepsilon^da_{m-1})$ and $(a_{m-1})$ are co-prime we get the assertion.\\

If $n \not | \text{ }m-1$, then $a_{m-1}$ is a unit, hence the ideals $(a_{nk})$ and $(-a_{m-1}^{d-1}+\alpha a_{nk}^d)$ are co-prime and the assertion follows.
\end{proof}

We will now prove the Corollary $1.1$ and Corollary $1.2$. We first need to recall a basic fact from algebraic number theory:\\

Let $L$ be a finite extension of a number field $K$. Let $\mathfrak{p}$ be a prime ideal in $K$. Suppose that $\mathfrak{p}$ factors in $L$ as
$$\mathfrak{p}\mathcal{O}_L = \mathfrak{P_1}^{e_1}\dots\mathfrak{P_g}^{e_g}.$$
Set $f_i = |\mathcal{O}_L/\mathfrak{P_i}|$ for $1\leq i \leq g$, and $n=[L:K]$. Then, we have \begin{equation}
	\sum_{i=1}^{g} e_if_i = n.
\end{equation}  
\begin{proof}[Proof of Corollary 1.1]
	\item[(1)] Set $K=\mathbb{Q}(c)$, and let $N=[K:\mathbb{Q}]$. For $n=1$, by (\cite{Hutz}, Corollary $3.3$), $G_d(m,n)$ has degree $(d^{m-1}-1)(d-1)$. Thus, we have $N\leq (d^{m-1}-1)(d-1)$. On the other hand, second part of Theorem $1.3$ gives $(a_i)^{(d^{m-1}-1)(d-1)} = (d)$ for any $i$. Factor $(a_i)$ into prime factors as
	$$(a_i) = \mathfrak{P_1}^{e_1}\dots \mathfrak{P_g}^{e_g}.$$ Taking the $(d^m-1)(d-1)$th power of each side, we get
	$$ (d) = \mathfrak{P_1}^{(d^m-1-1)(d-1)e_1}\dots \mathfrak{P_g}^{(d^m-1-1)(d-1)e_g}.$$ Using $(2.23)$, we get $$\sum_{i=1}^{g} (d^{m-1}-1)(d-1)e_if_i = N, $$ which implies $N\geq (d^{m-1}-1)(d-1)$. Hence, we obtain $N=(d^{m-1}-1)(d-1)$, which shows $G_d(m,n)$ is irreducible.
	\item[(2)] Let $N$ be as in the first part of the proof. For $n=2$, $d=2$, by (\cite{Hutz}, Corollary $3.3$), $G_d(m,n)$ has degree $2^{m-1}-1$ if $2| $ $m-1$, and has degree $2^{m-1}$ if $2\not |$ $m-1$. On the other hand, using the first part of Theorem $1.3$, we have $(a_i)^{2^{m-1}-1} = (2)$ for any $2| $ $i$ if 
	$m$ is odd, and $(a_i)^{2^{m-1}} = (2)$ for any $2 |$ $i$ if $m$ is even. Similar to the first part, for both cases N becomes equal to the degree of $G_d(m,n)$, which proves that $G_d(m,n)$ is irreducible.
\end{proof}
\begin{remark}
	In $(1)$, $d$ is totally ramified in $K$, and for all $i$, $(a_i)$ is the unique prime ideal of $\mathcal{O}_K$ that divides $(d)$. In $(2)$, $2$ totally ramified in $K$, and for all $i$ even, $(a_i)$ is the unique prime ideal of $\mathcal{O}_K$ that divides $(2)$.
\end{remark}
\begin{proof}[Proof of Corollary 1.2]
	By (\cite{Hamblen}, Theorem $8$), it suffices to show that there is no $\pm d$th power in the orbit. By the first part of Remark $2.5$, $a_i$ is a prime element of $\mathcal{O}_K$ for all $i$. Hence, $a_i$ can never be $\pm d$th power in $\mathcal{O}_K$ for $d\geq 2$, which finishes the proof.
\end{proof}

\begin{proof}[Proof of Theorem 1.4]
	From Theorem $1.3$, we already have that $a_i$ is a unit when $n\not$ $|$ $i$, which gives $(a_i) | (d)$. So, we only need to show $(a_i)|(d)$ for $n|i$. There exists a unique $k$ such that $m\leq nk\leq m+n-1$, i.e., $a_{nk}$ is periodic under $f$. By Lemma $2.4$, it is enough to prove the statement for $a_{nk}$. This is the same situation as in the second part of the proof of Theorem $1.3$. We rewrite the equation $(2.12)$:
	$$\sum_{i=0}^{d-1} [f_{c,d}^{m-1}(a_{nk})]^i[f_{c,d}^{m-1}(0)]^{d-1-i} = 0.$$ 
	Recalling that $d$ is not necessarily a prime, and making the obvious modifications in the equations $(2.13)-(2.19)$ accordingly, we obtain
	\begin{equation}
	da_{m-1}^{d-1}+a_{nk}^{d^{m-1}(d-1)} = \alpha a_{nk}^d
	\end{equation}
	with some $\alpha\in \mathcal{O}_K$.\\
	
	If $n\text{ }| \text{ }m-1$, then $a_{nk} = \varepsilon a_{m-1}$ with a unit $\varepsilon$, hence $(2.24)$ takes the form
	\begin{equation}
	da_{m-1}^{d-1}+(\varepsilon a_{m-1})^{d^{m-1}(d-1)} = \alpha (\varepsilon a_{m-1})^d
	\end{equation}
	and after dividing by $a_{m-1}^{d-1}$ we get
	\begin{equation}
	d+\varepsilon^{d^{m-1}(d-1)}a_{m-1}^{(d^{m-1}-1)(d-1)} = \alpha \varepsilon^d a_{m-1},
	\end{equation}
	which proves the assertion.\\
	
	If $n \not | \text{ }m-1$, then $a_{m-1}$ is a unit, hence $(2.24)$ directly proves the assertion.
	
\end{proof}
We finish this section by proving Corollary $1.5$, and giving a remark about it.

\begin{proof}[Proof of Corollary 1.5]
	Recall that if a prime $\mathfrak{p}$ of $K$ is ramified in $K_n$, then it must divide Disc$(f_{c,d}^n)$. Set $\triangle_n =$ Disc$(f_{c,d}^n)$. By (\cite{Jones}, Lemma 2.6), we have the relation \begin{equation}
		\triangle_n =\pm \triangle_{n-1}d^{d^n}a_n.
	\end{equation} Proceeding inductively, $(2.27)$ shows that if a prime $\mathfrak{p}$ of $K$ ramifies in $K_n$, then $\mathfrak{p}$ divides $(d)$ or $(a_i)$ for some $i$. By Theorem $1.4$, this directly implies $\mathfrak{p} |(d)$, as desired.
\end{proof}
\begin{remark}
	Let $f_{c,2}(x)=x^2+c$ have exact type $(m,1)$ or $(m,2)$, and set $K=\mathbb{Q}(c)$. $2$ is totally ramified in $K$ (by Remark $2.5$), let $\mathfrak{p}\subset \mathcal{O}_K$ be the unique prime above it (which is generated by one of the critical orbit elements). Then, it follows from Corollary $1.5$ that $K_n$ is unramified outside of the set $\{\mathfrak{p},\infty\}$ for all $n\geq 1$. Hence, this way we can get infinitely many explicit examples of pro-$2$ extensions of various number fields unramified outside of a finite prime and infinity.  
\end{remark}
\section{Periodic case}
In this section, we will give a simple observation about the case $m=0$.
\begin{lemma}
	Let $f_{c,d}(x) = x^d+c \in \bar{\mathbb{Q}}[x]$ be a PCF polynomial with exact type $(0,n)$. Set $K=\mathbb{Q}(c)$, and let $O_{f_{c,d}} = \{a_1, a_2,\dots , a_n=0\} \subset \mathcal{O}_K$ be the critical orbit of $f_{c,d}$. Then, $a_i$ is a unit in $\mathcal{O}_K$ for all $1\leq i \leq n-1$.
\end{lemma}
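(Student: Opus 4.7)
The plan is to mimic the coefficient-matching reasoning used in the proof of Theorem $1.3$(a), but applied to $\phi(x) := f_{c,d}^n(x) - x$ rather than to the usual dynatomic setup. First I would record the basic observations forced by the exact type $(0,n)$: we have $a_n = 0$, and the elements $a_1,\dots,a_{n-1}$ are pairwise distinct and all nonzero, since otherwise $0$ would return to itself in fewer than $n$ steps. Hence $0,a_1,\dots,a_{n-1}$ are $n$ distinct roots of $\phi(x) \in \mathcal{O}_K[x]$.

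The key ingredient is to read off the coefficient of $x^1$ in $\phi$. Because $f_{c,d}(x) = x^d + c$ is a polynomial in $x^d$ plus a constant, a trivial induction on $n$ shows that $f_{c,d}^n(x)$ is also a polynomial in $x^d$ plus a constant; in particular every non-constant monomial of $f_{c,d}^n(x)$ has degree divisible by $d \geq 2$. Consequently the coefficient of $x^1$ in $f_{c,d}^n(x)$ is $0$, while its constant term equals $a_n = 0$. So $\phi(x)$ has zero constant term and the coefficient of $x^1$ in $\phi(x)$ is $-1$.

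Since $\phi$ is monic and the linear factors $x,\,x-a_1,\dots,x-a_{n-1}$ are monic with coefficients in $\mathcal{O}_K$, iterated polynomial division yields
$$\phi(x) = x(x-a_1)(x-a_2)\cdots(x-a_{n-1})\, Q(x)$$
for some monic $Q(x) \in \mathcal{O}_K[x]$. Comparing the coefficients of $x^1$ on the two sides gives
$$-1 = (-1)^{n-1} a_1 a_2 \cdots a_{n-1}\, Q(0),$$
and since $Q(0) \in \mathcal{O}_K$ this forces $a_1 a_2 \cdots a_{n-1}$ to be a unit in $\mathcal{O}_K$. Therefore each $a_i$ with $1 \leq i \leq n-1$ is itself a unit, which is the desired conclusion.

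I do not anticipate any real obstacle: the only mildly delicate point is the structural fact that $f_{c,d}^n(x)$ has no $x^1$-term, which I will handle in one line by induction on $n$. Everything else is a direct coefficient comparison in a monic factorization over $\mathcal{O}_K$.
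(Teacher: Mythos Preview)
Your argument is correct. The only subtlety---that $f_{c,d}^n(x)$ has no $x^1$-term---is indeed immediate once you write $f_{c,d}^n(x)=f_{c,d}^{n-1}(x^d+c)$, which exhibits $f_{c,d}^n$ as a polynomial in $x^d$. The coefficient comparison then goes through exactly as you wrote.

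Your route, however, is genuinely different from the paper's. The paper does not work with $\phi(x)=f_{c,d}^n(x)-x$ at all. Instead it subtracts the recurrences $a_{i+1}=a_i^d+c$ and $a_{j+1}=a_j^d+c$ to obtain $a_{i+1}-a_{j+1}=a_i^d-a_j^d$, and then observes that since $i\mapsto i+1$ permutes the indices cyclically, the product $\prod_{i\neq j}(a_i-a_j)$ is unchanged under this shift; this forces
\[
\prod_{i\neq j}\frac{a_i^d-a_j^d}{a_i-a_j}=1,
\]
so each factor is a unit in $\mathcal{O}_K$. Specializing $j=n$ (so $a_j=0$) gives that $a_i^{d-1}$ is a unit, hence so is $a_i$. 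Your approach is closer in spirit to the paper's proof of Theorem~1.3(a), with the nice twist that, since the constant term $a_n$ vanishes here, you pass to the coefficient of $x$ instead; it is arguably the more direct of the two arguments. The paper's telescoping-product approach, on the other hand, yields the extra byproduct that all the ``cyclotomic'' quotients $(a_i^d-a_j^d)/(a_i-a_j)$ are units, not just the $a_i$ themselves.
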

\begin{proof}
	In view of $a_{i+1} = a_i^d+c$, $a_{j+1} = a_j^d+c$ one gets after subtraction and multiplication the equality
	$$\prod_{i\neq j} \frac{a_i^d-a_j^d}{a_i-a_j} = 1,$$
	hence all elements
	$$\frac{a_i^d-a_j^d}{a_i-a_j} \text{  }\text{ }\text{ } (i\neq j)$$
	are units, and putting here $j = n$ we obtain that for $i\neq j$ $a_i^{d-1}$ is a unit, and so is $a_i$.
\end{proof}
\section{Appendix}
We finish the paper by presenting some data about the question stated at the bottom of page $3$. Note that $G_d(m,n)$ is not irreducible in the cases below, and MAGMA computations show that we get different values of $A$ (with the notation of Theorem $1.3$) depending on the minimal polynomial of $c$ over $\mathbb{Q}$.\\

$\bullet (d,m,n) = (4,2,1) \implies (a_i)^{6} = (4)$ or $(a_i)^{12} = (4) $ for all $i$.\\

$\bullet (d,m,n) = (4,2,2) \implies (a_i)^{8} = (4)$ or $(a_i)^{16} = (4) $ for all $2|i$.\\

$\bullet (d,m,n) = (4,2,3) \implies (a_i)^{8} = (4)$ or $(a_i)^{16} = (4) $ for all $3|i$.\\

$\bullet (d,m,n) = (4,3,1) \implies (a_i)^{30} = (4)$ or $(a_i)^{60} = (4) $ for all $i$.\\

$\bullet (d,m,n) = (4,3,2) \implies (a_i)^{30} = (4)$ or $(a_i)^{60} = (4) $ for all $2|i$.\\

$\bullet (d,m,n) = (4,4,1) \implies (a_i)^{126} = (4)$ or $(a_i)^{252} = (4) $ for all $i$.\\

$\bullet (d,m,n) = (8,2,1) \implies (a_i)^{21} = (8)$ or $(a_i)^{42} = (8)$ or $(a_i)^{84} = (8)$  for all $i$.\\

$\bullet (d,m,n) = (9,2,1) \implies (a_i)^{32} = (9)$ or $(a_i)^{96} = (9)$ for all $i$.\\

We also would like to note that this phenomenon does not necessarily hold when $d$ is not a prime power. For instance, taking $(d,m,n) = (6,3,1)$, one sees that $(6) = \mathfrak{p_1}^2 \mathfrak{p_2}$ for some prime ideals $\mathfrak{p_1}, \mathfrak{p_2}$ in $\mathbb{Q}(c)$, which shows that $(6)$ cannot be a perfect power.

\subsection*{Acknowledgments}
The author owes Nigel Boston and Sarah Koch a debt of gratitude for their very helpful comments on this work. The author also would like to thank the anonymous referee for their careful reading of the article and helpful comments.


\normalsize

\end{document}